\newtheorem{theorem}{Theorem}
\newtheorem*{theorem*}{Theorem}
\newtheorem{lemma}[theorem]{Lemma}
\newtheorem{proposition}[theorem]{Proposition}
\newtheorem{claim}[theorem]{Claim}
\newtheorem{maintheorem}{Theorem}
\theoremstyle{definition}
\newtheorem{definition}[theorem]{Definition}
\newtheorem*{definition*}{Definition}
\newtheorem*{lemma*}{Lemma}
\numberwithin{equation}{section}
\numberwithin{theorem}{section}
\newcommand{\R}{\mathbb{R}}
\newcommand{\N}{\mathbb{N}}
\newcommand{\Z}{\mathbb{Z}}
\newcommand{\eps}{\varepsilon}
\DeclareDocumentCommand\Pr{ m g }{%
    {   \IfNoValueTF {#2}
      {\mathbb{P}\left[{#1}\right]}
      {\mathbb{P}\left[{#1}\middle\vert{#2}\right]}%
    }
}
\DeclareDocumentCommand\E{ m g }{%
    {   \IfNoValueTF {#2}
      {\mathbb{E}\left[{#1}\right]}
      {\mathbb{E}\left[{#1}\middle\vert{#2}\right]}%
    }
}
\DeclareMathOperator{\supp}{supp}
\def\dd{\mathrm{d}}
\newcommand{\norm}[1]{\left\lVert#1\right\rVert}
\newcommand{\cent}[1]{C_G(#1)}
\begin{document}

\title[]{Choquet-Deny groups and the infinite conjugacy class property}

\author[J.\ Frisch]{Joshua Frisch}
\author[Y.\ Hartman]{Yair Hartman}
\author[O.\ Tamuz]{Omer Tamuz}
\author[P.\ Vahidi Ferdowsi]{Pooya Vahidi Ferdowsi}

\address[J.\ Frisch, O.\ Tamuz, P.\ Vahidi Ferdowsi]{California Institute of Technology}
\address[Y.\ Hartman]{Northwestern University} 

%    Information for third author

%\thanks{}
\date{\today}

\begin{abstract}
A countable discrete group $G$ is called Choquet-Deny if for every non-degenerate probability measure $\mu$ on $G$ it holds that all bounded $\mu$-harmonic functions are constant. We show that a finitely generated group $G$ is Choquet-Deny if and only if it is virtually nilpotent. For general countable discrete groups, we show that $G$ is Choquet-Deny if and only if none of its quotients has the infinite conjugacy class property. Moreover, when $G$ is not Choquet-Deny, then this is witnessed by a symmetric, finite entropy, non-degenerate measure.
\end{abstract}

\maketitle
%\tableofcontents

\section{Introduction}
Let $G$ be a countable discrete group. A probability measure $\mu$ on $G$ is {\em non-degenerate} if its support generates $G$ as a semigroup.\footnote{In the context of Markov chains such measures are called {\em irreducible}.} A function $f \colon G \to \R$ is {\em $\mu$-harmonic} if $f(k) = \sum_{g \in G}\mu(g) f(k g)$ for all $k \in G$. We say that the {\em measured group} $(G,\mu)$ is {\em Liouville} if all the bounded $\mu$-harmonic functions are constant; this is equivalent to the triviality of the Poisson boundary $\Pi(G,\mu)$~\cite{furstenberg1963noncommuting,furstenberg1971random,furstenberg1973boundary} (also called the Furstenberg-Poisson boundary; for formal definitions see also, e.g., Furstenberg and Glasner~\cite{furstenberg2009stationary}, Bader and Shalom~\cite{bader2006factor}, or a survey by Furman~\cite{furman2002random}).

When $G$ is non-amenable, $(G,\mu)$ is not Liouville for every non-degenerate $\mu$~\cite{furstenberg1973boundary}. Conversely, when $G$ is amenable, then there exists some non-degenerate $\mu$ such that $(G,\mu)$ is Liouville, as shown by Kaimanovich and Vershik~\cite{kaimanovich1983random} and Rosenblatt~\cite{rosenblatt1981ergodic}. It is natural to ask for which groups $G$ it holds that $(G,\mu)$ is Liouville for {\em every} non-degenerate $\mu$. We call such groups {\em Choquet-Deny} groups; as we discuss in \S\ref{sec:CD-defs}, there are a few variants of this definition (see, e.g.,~\cite{guivarc1973croissance, glasner1976proximal, glasner1976choquet}, or~\cite{jaworski2007choquet}), which, however, we show to be equivalent.

The classical Choquet-Deny Theorem (which was first proved for $\Z^d$ by Blackwell~\cite{blackwell1955transient}) states that abelian groups are Choquet-Deny~\cite{cd1960}; the same holds for virtually nilpotent groups~\cite{dynkin1961random}. There are many examples of amenable groups that are not Choquet-Deny:  first examples of such groups\footnote{In the Lie group setting, an example of an amenable group that is not Choquet-Deny was already known to Furstenberg~\cite{furstenberg1963noncommuting}.} are due to Kaimanovich~\cite{kaimanovich1983examples} and Kaimanovich and Vershik~\cite{kaimanovich1983random}, they include locally finite groups; Erschler shows that finitely generated solvable groups that are not virtually nilpotent are not Choquet-Deny~\cite{erschler2004liouville}, and that even some groups of intermediate growth are not Choquet-Deny~\cite{erschler2004boundary}.  Kaimanovich and Vershik~\cite[p. 466]{kaimanovich1983random} conjecture that ``Given an exponential group G, there exists a symmetric (nonfinitary, in general) measure with non-trivial boundary.'' See Bartholdi and Erschler~\cite{bartholdi2017poisson} for additional related results and further references and discussion.

Our main result is a characterization of Choquet-Deny groups.  We say that $G$ has the {\em infinite conjugacy class} property (ICC) if it is non-trivial, and if each of its non-trivial elements has an infinite conjugacy class. We say that $\mu$ is {\em fully supported} if $\supp \mu=G$; obviously this implies that $\mu$ is non-degenerate.
\begin{maintheorem}
\label{thm:main}
  A countable discrete group $G$ is Choquet-Deny if and only if it has no ICC quotients. Moreover, when $G$ does have an ICC quotient, then there exists a fully supported, symmetric, finite entropy probability measure $\mu$ on $G$ such that $(G,\mu)$ is not Liouville. In particular, if $G$ is finitely generated, then it is Choquet-Deny if and only if it is virtually nilpotent.
\end{maintheorem}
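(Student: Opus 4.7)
The plan is to prove the two implications of the equivalence separately and extract the finitely generated case as a short structural corollary. One direction is a construction of a non-Liouville random walk with several simultaneous good properties; the other is a structural transfinite induction.

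For the direction ``$G$ has an ICC quotient $\Rightarrow$ $G$ admits a fully supported, symmetric, finite-entropy, non-Liouville measure'', I would first reduce to the case where $G$ itself is ICC. Given a surjection $\pi\colon G \twoheadrightarrow H$ with $H$ ICC, any measure $\mu$ on $G$ with $\pi_*\mu = \nu$ has the property that bounded $\nu$-harmonic functions on $H$ pull back to bounded $\mu$-harmonic functions on $G$, so constructing $\nu$ on $H$ and then lifting suffices. The lifting step is routine: spread the mass of each $\nu$-atom over its fibre using an auxiliary symmetric, fully supported, finite-entropy distribution on the fibre, verify symmetry by a careful choice, and check that entropies add. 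The core task is therefore to produce $\nu$ on an arbitrary countable ICC group $H$. My approach would be: fix $s \neq e$ with infinite conjugacy class, choose an infinite family of conjugators $(t_n)$ making the conjugates $s_n := t_n s t_n^{-1}$ pairwise distinct, and design $\nu$ (with suitably decaying weights) so that the $\nu$-random walk encodes in its tail which of the $s_n$ it has visited, producing a non-trivial tail event. Full support is achieved by adding a symmetric, fully supported, finite-entropy perturbation of small mass. I expect this to be the main obstacle: simultaneously achieving \emph{symmetric}, \emph{finite entropy}, \emph{fully supported}, and \emph{non-Liouville} on every countable ICC group is delicate, and in a strong form settles the Kaimanovich--Vershik question cited in the introduction.

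For the converse ``no ICC quotient $\Rightarrow$ Choquet-Deny'', I would first note that the hypothesis is equivalent to $G$ being \emph{FC-hypercentral}: set $F_0 = \{e\}$ and $F_{\alpha+1} = \pi_\alpha^{-1}\bigl(F(G/F_\alpha)\bigr)$, where $F(\cdot)$ denotes the FC-center and $\pi_\alpha\colon G \to G/F_\alpha$ the quotient, taking unions at limit ordinals; the hypothesis then says $F_\lambda = G$ for some ordinal $\lambda$. Given a non-degenerate $\mu$ and a bounded $\mu$-harmonic $f$, transfinite induction on $\alpha$ would yield that $f$ is constant on right $F_\alpha$-cosets, with the limit step being routine. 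The successor step is the following key lemma: \emph{if $\bar{\mu}$ is non-degenerate on a group $\bar G$, $\bar f$ is bounded $\bar\mu$-harmonic, and $\bar g$ has finite conjugacy class in $\bar G$, then $\bar f(x \bar g) = \bar f(x)$ for every $x$}. To prove it one exploits that the centralizer $C_{\bar G}(\bar g)$ has finite index, so the random walk visits each of its cosets with positive density; this reduces the problem to the classical central Choquet-Deny setting on $C_{\bar G}(\bar g)$, where a telescoping/averaging argument applied to $h(y) := \bar f(y\bar g) - \bar f(y)$ forces $h \equiv 0$.

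Finally, for the finitely generated case it remains to show that a finitely generated FC-hypercentral group is virtually nilpotent. This follows from the classical theorem of B.\,H.\ Neumann that a finitely generated FC-group is center-by-finite, which combined with FC-hypercentrality implies that the FC-series stabilises at a finite ordinal and $G$ is nilpotent-by-finite; alternatively, one may observe that such $G$ has polynomial growth and invoke Gromov's theorem.
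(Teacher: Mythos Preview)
Your treatment of the easy direction (no ICC quotient $\Rightarrow$ Choquet--Deny) via FC-hypercentrality and transfinite induction is correct and is essentially Jaworski's argument, which the paper simply cites rather than reproves. Likewise your handling of the finitely generated case is fine; the paper appeals to McLain and Duguid--Gruenberg rather than Neumann or Gromov, but the conclusion is the same. The lifting step from an ICC quotient back to $G$ is also correct and matches the paper.

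The genuine gap is in the hard direction on the ICC group itself. Your proposal---pick $s\neq e$, choose conjugators $t_n$ giving distinct conjugates $s_n$, and ``design $\nu$ so that the random walk encodes in its tail which of the $s_n$ it has visited''---is not a mechanism that works in a general ICC group. That heuristic is borrowed from lamplighter-type constructions, where there is an abelian base recording visited sites; in an arbitrary ICC group there is no such recording device, and ``which $s_n$ were visited'' is neither obviously tail-measurable nor obviously nonconstant. You yourself flag this as the main obstacle, but the sketch contains no idea for overcoming it.

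The paper's route is quite different. It does not try to build a tail event directly. Instead it uses Glasner's criterion: to get a non-trivial boundary it suffices to find some $h\neq e$ with $\lim_m\norm{h\mu^{*m}-\mu^{*m}}>0$. The construction of such a $\mu$ on an amenable ICC group rests on two ingredients absent from your proposal: (i) the existence of \emph{super-switching elements} $g$ for any finite symmetric set $X$ (elements with $X\cap(g^{\pm1}Xg^{\pm1})\subseteq\{e\}$), proved using an invariant mean and hence amenability in an essential way; and (ii) a specific heavy-tailed distribution $p(n)\propto n^{-5/4}$ on $\mathbb{N}$ governing which ``level'' each step of the walk uses, chosen so that with high probability the running maximum is large and unique. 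One then builds $\mu$ as a mixture $\sum_n p(n)\mu_n$ where each $\mu_n$ is supported on $\{g_n^{\pm1},a_n^{\pm1}\}$ with $g_n$ a super-switching element for everything constructed so far. The combinatorics of super-switching plus the unique-maximum property force $hr(\alpha)\neq r(\beta)$ for typical length-$m$ words, which gives the total-variation lower bound. None of this structure is suggested by the conjugate-tracking idea.
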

That a group with no ICC quotients is Choquet-Deny was shown by Jaworski~\cite[Theorem 4.8]{jaworski2004countable}.\footnote{In fact, Jaworski proves there a stronger statement; see the discussion in \S\ref{sec:CD-defs}.} Our contribution is therefore in the proof of the converse, which appears in \S\ref{sec:hard}.

Groups with no ICC quotients are known as FC-hypercentral  (see, e.g.,~\cite{mclain1956remarks,duguid1956fc}, or \cite[\S 4.3]{robinson1972finiteness}).  This class is closed under
forming subgroups, quotients, direct products and finite index extensions, and includes all virtually nilpotent  groups.  Among finitely generated groups, virtually nilpotent groups are precisely those with no ICC quotients (see \cite[Theorem 2]{mclain1956remarks} and \cite[Theorem 2]{duguid1956fc}); this implies the result in Theorem~\ref{thm:main} for finitely generated groups. Since finitely generated groups of exponential growth are not virtually nilpotent, Theorem~\ref{thm:main} implies that the above mentioned conjecture of Kaimanovich and Vershik~\cite{kaimanovich1983random} is correct. 

A very recent result by three of the authors of this paper shows that a countable discrete group is strongly amenable if and only if it has no ICC quotients~\cite{frisch2018strong}. This implies that $G$ is strongly amenable if and only if $(G,\mu)$ is Liouville for every non-degenerate $\mu$, paralleling the above mentioned characterization of amenability as equivalent to the existence of a non-degenerate $\mu$ such that $(G,\mu)$ is Liouville. While the proofs of these two similar results are different, it is natural to ask whether there is some deeper connection between strong amenability and the Choquet-Deny property.

\subsection{Different possible definitions of Choquet-Deny groups}
\label{sec:CD-defs}

Our definition of Choquet-Deny groups is not the usual one, which states that a group is Choquet-Deny if $(G,\mu)$ is Liouville for every {\em adapted} measure $\mu$, where $\mu$ is called adapted if its support generates $G$ as a {\em group} (rather than as a semigroup, as in the non-degenerate case) \cite{guivarc1973croissance, glasner1976proximal, glasner1976choquet}. Yet another definition used in the literature requires that for {\em every} $\mu$, every bounded $\mu$-harmonic function is constant on the left cosets of $G_{\mu}$, where $G_\mu$ is the subgroup of $G$ generated by the support of $\mu$~\cite{jaworski2007choquet}. 

While a priori these are different definitions, they are equivalent, as demonstrated by our result and by Jaworski's Theorem 4.8 in~\cite{jaworski2004countable}. Jaworski's result shows that groups with no ICC quotients are Choquet-Deny according to any of these definitions. Since our construction of $\mu$ with a non-trivial boundary yields measures that are supported on all of $G$ (hence non-degenerate, hence adapted), it shows that groups with ICC quotients are not Choquet-Deny according to any of these definitions. Moreover, our result shows that the class of Choquet-Deny groups (whether defined with adapted or with non-degenerate measures) is closed under taking subgroups, which,  to the best of our knowledge, was also not previously known.

\subsection*{Acknowledgments}
We would like to thank Anna Erschler  and Vadim Kaimanovich for many useful comments on the first draft of this paper. We thank Wojciech Jaworski for bringing a number of errors to our attention and suggesting many improvements. We likewise thank an anonymous referee for many helpful suggestions.

\section{Proofs}

\label{sec:hard}
 In this section we prove the main result of our paper, Theorem~\ref{thm:main}. Unless stated otherwise, we will assume that all groups are countable and discrete. 
 
 Recall that a probability measure $\mu$ on $G$ is symmetric if $\mu(g)=\mu(g^{-1})$ for all $g \in G$. Its Shannon entropy (or just entropy) is $H(\mu)=-\sum_{g \in G}\mu(g)\log\mu(g)$.
 
 Our Theorem~\ref{thm:main} is a direct consequence of \cite[Theorem 4.8]{jaworski2004countable}, which proves it for the case of groups with no ICC quotients, and of the following proposition, which handles the case of groups with ICC quotients.
\begin{proposition}\label{prop:hard-direction}
 Let $G$ be a group with an ICC quotient. Then there exists a fully-supported, symmetric, finite entropy probability measure $\mu$ on $G$ such that $\Pi(G,\mu)$ is non-trivial.
\end{proposition}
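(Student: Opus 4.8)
The plan is to reduce immediately to the case where $G$ itself is ICC: if $N \trianglelefteq G$ with $G/N$ ICC, then any non-degenerate measure on $G/N$ with nontrivial boundary lifts to a non-degenerate measure on $G$ with nontrivial boundary (harmonic functions on the quotient pull back to harmonic functions on $G$), and symmetry, full support, and finite entropy can be arranged on the lift by combining with any fixed symmetric, fully supported, finite entropy measure on $G$. So from now on assume $G$ is ICC. The goal is then to build a single measure $\mu$ on $G$ that is symmetric, fully supported, of finite entropy, and non-Liouville.

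\textbf{Strategy: detect the boundary via a random-walk entropy / Kaimanovich-Vershik argument, or via an explicit bounded harmonic function.} The cleanest route is to exhibit an explicit non-constant bounded $\mu$-harmonic function, or equivalently to show the tail $\sigma$-algebra of the random walk is non-trivial. Since $G$ is ICC, it is infinite and in fact every nontrivial conjugacy class is infinite; the idea is to pick a carefully chosen infinite sequence of elements whose conjugates "spread out" fast enough that the random walk can be tracked. Concretely, I would choose a generating sequence $g_1, g_2, \dots$ of $G$ together with rapidly growing "helper" elements, and define $\mu$ as a convex combination $\mu = \sum_n \alpha_n \mu_n$ where each $\mu_n$ is a symmetric measure supported on $\{g_n^{\pm 1}\}$ together with finitely many conjugators, with weights $\alpha_n \to 0$ fast enough to guarantee finite entropy and full support (full support is easy: add a tiny symmetric fully supported piece $\sum_g \beta_g(\delta_g + \delta_{g^{-1}})$ with $\sum \beta_g < \infty$ and $\sum \beta_g \log(1/\beta_g) < \infty$). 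The heart of the construction is to use the ICC property to ensure that, with positive probability, certain "coordinates" of the walk stabilize, producing a non-trivial tail event.

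\textbf{The key mechanism.} In an ICC group one can find, for any finite set, elements that conjugate it "away to infinity"; dually, ICC groups are exactly the groups that are not FC-hypercentral, so there is a descending structure one can exploit. I expect the actual construction to resemble Erschler's and Kaimanovich's examples: one arranges the increments so that the walk, projected onto a suitable space of "configurations" (e.g. an increasing union of finite sets indexed by $n$, or the coset structure with respect to a chain of subgroups), behaves like a transient walk whose limit configuration is a non-trivial boundary invariant. The steps, in order: (1) reduce to $G$ ICC; (2) use the ICC hypothesis to extract a sequence of elements and conjugators with the required "spreading" property — this is where the combinatorial group theory lives; (3) define $\mu$ as the weighted sum, verifying symmetry and full support by construction, and finite entropy by choosing weights like $\alpha_n \asymp 2^{-n}$ against finitely-supported pieces (so $H(\mu) \le \sum_n \alpha_n(H(\mu_n) + \log(1/\alpha_n)) < \infty$); (4) exhibit the non-trivial bounded harmonic function, i.e. show $\Pi(G,\mu)$ is non-trivial, by showing some tail coordinate converges almost surely to a genuinely random limit.

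\textbf{Main obstacle.} The delicate point is step (2)–(4): ICC is a rather weak hypothesis (weaker than, say, having exponential growth or being solvable), so one cannot rely on geometric features of the group. The challenge is to turn the purely algebraic statement "every nontrivial conjugacy class is infinite" into a dynamical transience statement for a walk of our design, while simultaneously keeping $\mu$ symmetric and finite-entropy — symmetry in particular tends to make walks recurrent on "small" pieces, so the spreading has to happen across infinitely many independent scales, each contributing a bounded amount of entropy. I would expect the proof to isolate a lemma of the form: \emph{in any ICC group there is a chain of elements and finite sets along which a suitably weighted symmetric walk has a non-trivial tail}, and the bulk of the work is proving that lemma, presumably by an inductive construction choosing the $n$-th scale to be "invisible" to the first $n-1$ scales using the infinitude of conjugacy classes at each stage.
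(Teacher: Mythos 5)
Your high-level plan is aligned with the paper's: reduce to a single ICC group, build $\mu$ as a convex combination $\sum_n \alpha_n\mu_n$ over ``scales,'' and use the ICC property to inductively choose elements at each scale that do not interfere with earlier scales. But everything you flag as ``this is where the combinatorial group theory lives'' and ``the bulk of the work'' is exactly the entire content of the proposition, and you defer it to an unproved lemma without saying how such a lemma could be established. The specific missing ideas are the ones that make this tractable. First, the reduction is to \emph{amenable} ICC, not just ICC: the non-amenable case is classical (every non-degenerate measure is non-Liouville), and amenability is then used essentially, to get an invariant finitely additive density on $G$. Second, the non-triviality of $\Pi(G,\mu)$ is not shown by constructing a harmonic function or a tail event; it is deduced from Glasner's criterion, namely that if $\mu$ is fully supported and $\lim_m\norm{h\mu^{*m}-\mu^{*m}}>0$ for some $h\ne e$ then the Poisson boundary is non-trivial, which reduces everything to a total-variation estimate on convolution powers. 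Third, the device for making scales non-interfering is the notion of a \emph{super-switching element} $g$ for a finite symmetric set $X$ (meaning $X\cap(gXg\cup gXg^{-1}\cup g^{-1}Xg\cup g^{-1}Xg^{-1})\subseteq\{e\}$), and the fact that in an amenable ICC group there are infinitely many such elements for any $X$; this is proved by the density argument and is the precise point where ICC enters, not some vague ``spreading.'' Fourth, the weights $\alpha_n=p(n)$ must be \emph{heavy-tailed} ($p(n)\asymp n^{-5/4}$) so that among the first $m$ increments the maximum scale used is, with high probability, unique, attained only once, and of order at least $k^2$ after time $k$; without this, one cannot isolate a single pivot $g_p^{\pm 1}$ in the resulting word identity $hb_1g_p^{\pm1}b_2=c_1g_q^{\pm1}c_2$ and the cancellation scheme that forces $h=e$ fails. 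None of these four mechanisms appear in your proposal, and without them the posited ``chain of elements and finite sets with a non-trivial tail'' is merely restating the theorem.

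A smaller but genuine issue is in your reduction: you propose to arrange symmetry, full support, and finite entropy ``by combining with any fixed symmetric, fully supported, finite entropy measure on $G$,'' but taking a convex combination with another measure destroys the property $\pi_*\mu=\bar\mu$ that your ``harmonic functions pull back'' argument relies on. What is needed is an exact lift — a measure on $G$ projecting precisely to $\bar\mu$ and also symmetric, fully supported, and of finite entropy — and this is built coset by coset, not by mixing with an auxiliary measure.
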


The main technical effort in the proof of Proposition~\ref{prop:hard-direction} is in the proof of the following proposition.
\begin{proposition}
\label{prop:non-invariant}
  Let $G$ be an amenable ICC group. For every $h \in G \setminus \{e\}$ there exists a fully supported, symmetric, finite entropy probability measure $\mu$ such that
  \begin{align}
  \label{eq:mu}
      \lim_{m\to\infty}\norm{h\mu^{*m}-\mu^{*m}} > 0.
  \end{align}
\end{proposition}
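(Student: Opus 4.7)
The plan is to construct $\mu$ by combining amenability of $G$ (which supplies F{\o}lner sets) with the infinite conjugacy class of $h$ (which supplies infinitely many distinct conjugates $ghg^{-1}$). These ingredients must be balanced: amenability pushes the walk toward Liouville behavior, while the ICC hypothesis on $h$ prevents any finite F{\o}lner set from absorbing a conjugate of $h$, leaving room for a multi-scale observable on which left-multiplication by $h$ acts non-trivially.

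\textbf{Setup and construction.} Using amenability, fix a sequence of finite symmetric sets $F_1\subset F_2\subset\cdots$ with $\bigcup_n F_n = G$ and $\abs{gF_n\triangle F_n}/\abs{F_n}\to 0$ very rapidly in $n$ for every fixed $g\in G$. Using the infinite conjugacy class of $h$, choose elements $g_n\in G$ so that the conjugates $h_n := g_n h g_n^{-1}$ are all distinct and ``escape the $F_n$ at the right rate,'' for instance $h_n\notin F_n$. Define
\[
\mu = \sum_{n=1}^{\infty} c_n\, \nu_n,
\]
where $\nu_n$ is a symmetric probability measure supported on a finite set $E_n\supset F_n$ that also contains $g_n, h_n$ and their inverses, and the weights $c_n>0$ are chosen so that $\sum c_n=1$, $\sum c_n\log\abs{E_n}<\infty$ (finite entropy), and $\bigcup_n E_n=G$ (full support). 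Symmetry is built in.

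\textbf{The core step.} Produce a bounded function $f\colon G\to\R$ such that $\E{f(X_m)}-\E{f(hX_m)}$ stays bounded away from $0$ as $m\to\infty$, where $X_m$ is the walk of step-law $\mu$ started at $e$. A natural candidate is a multi-scale ``tail observable'' that encodes a coarse statistic at each scale $\nu_n$ and is engineered, via the placement of the $h_n$'s, so that $h$ perturbs its value with positive probability. Two properties must be verified: first, that subsequent steps of the walk almost preserve $f(k X_m)$, owing to the F{\o}lner property of the $F_n$'s (so $f$ behaves like a tail event); second, that left-translation by $h$ shifts the value of $f$ on a set of positive mass, owing to the ICC-enforced distinctness of the $h_n$'s at each scale. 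Combined, these yield $\lim_m\abs{\E{f(X_m)}-\E{f(hX_m)}}>0$, which by the dual characterization of total variation implies $\lim_m\norm{h\mu^{*m}-\mu^{*m}}>0$.

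\textbf{Main obstacle.} The difficult point is engineering $\mu$ so that the Liouville tendency produced by amenability at each individual scale is globally defeated by the presence of distinct conjugates of $h$ across scales. The scales must be separated enough for the F{\o}lner approximations at scale $n$ not to absorb $h_n$, while the weights $c_n$ must stay light enough for finite entropy and full support. I expect that the construction will require a diagonal choice: the walk up to time $m$ is, with high probability, essentially a product of $\nu_n$-steps at one dominant scale $n=n(m)$, and the $h$-perturbation at that scale produces a macroscopically different value of $f$ uniformly in $m$. Reconciling these competing constraints, and producing an observable that is simultaneously robust under F{\o}lner smoothing yet sensitive to $h$, is where the bulk of the technical work should lie.
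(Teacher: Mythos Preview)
Your proposal has the right high-level shape---a mixture $\mu = \sum c_n \nu_n$ across scales, with a ``dominant scale'' at each time $m$---but the core mechanism is missing, and the role you assign to F{\o}lner sets points in the wrong direction. F{\o}lner averaging is what \emph{produces} Liouville behavior: if $\nu_n$ is close to uniform on a F{\o}lner set $F_n$, then $g\nu_n \approx \nu_n$ in total variation for fixed $g$ and large $n$, which pushes $\norm{h\mu^{*m} - \mu^{*m}}$ toward $0$, not away from it. You acknowledge this tension but never supply a device to overcome it; the ``multi-scale tail observable'' $f$ is not defined, and the claim that conjugates $h_n = g_n h g_n^{-1} \notin F_n$ make $f$ sensitive to left-multiplication by $h$ has no mechanism behind it. As written, the argument would have to show that the rare non-F{\o}lner increments dominate the F{\o}lner ones in the boundary, and that is essentially the whole proposition.

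The paper's route is quite different. Amenability is used only through an invariant mean, to prove that in an amenable ICC group every finite symmetric set $X$ admits infinitely many \emph{super-switching elements}: elements $g$ with $X \cap (g^{\pm 1} X g^{\pm 1}) \subseteq \{e\}$. The measure is $\mu = \sum_n p(n)\mu_n$ where $p(n) = c\,n^{-5/4}$ is heavy-tailed and each $\mu_n$ is supported on four points $\{a_n^{\pm 1}, g_n^{\pm 1}\}$, with $g_n$ chosen to be super-switching for a rapidly growing product set $(C_{n-1})^{2n-1}$ and to lie outside $(C_{n-1})^{8n-7}$. No observable $f$ is built; instead one shows directly that for walk words $r(\alpha), r(\beta)$ lying in a set of $\eta$-probability at least $1-2\eps$, the equation $h\,r(\alpha) = r(\beta)$ forces $h = e$. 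The heavy tail guarantees that the maximal index in a length-$m$ word is unique and large, so the corresponding letter is a super-switching $g_p^{\pm 1}$; the super-switching property lets one cancel it and recurse down to $h = e$. This yields $\norm{h\mu^{*m} - \mu^{*m}} \geq 2 - 8\eps$ for all large $m$. The ingredients you are missing are precisely this switching-element lemma and the heavy-tailed combinatorics that make the walk word uniquely parseable.
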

Here $\mu^{*m}$ is the $m$-fold convolution $\mu * \cdots * \mu$.
We will prove this Proposition later, and now turn to the proof of Proposition~\ref{prop:hard-direction}.

\begin{proof}[Proof of Proposition~\ref{prop:hard-direction}]
  The case of non-amenable $G$ is known, so assume that $G$ is amenable and has an ICC quotient $Q$. Let $h$ be a non-identity element of $Q$. Applying Proposition~\ref{prop:non-invariant} to $Q$ and $h$ yields a finite entropy, symmetric  measure $\bar{\mu}$ on $Q$ that is fully supported, and satisfies~\eqref{eq:mu}. 
  
  Since $\bar{\mu}$ has full support and satisfies~\eqref{eq:mu}, it follows from \cite[Theorem 2]{glasner1976choquet} that $(Q,\bar{\mu})$ has a non-trivial Poisson boundary. Let $\mu$ be any symmetric, finite entropy non-degenerate probability measure on $G$ that is projected to $\bar{\mu}$; the existence of such a $\mu$ is straightforward. Then $(G,\mu)$ has a non-trivial Poisson boundary. 
\end{proof}

\subsection{Switching Elements}\label{sec:switching} Here we introduce two notions: switching elements and super-switching elements. We will use these notions in the proof of Proposition~\ref{prop:non-invariant}.

\begin{definition}
  Let $X$ be a finite symmetric subset of a group $G$.
  \begin{itemize}
    \item We call $g\in G$ a {\em switching element for $X$} if 
    $$X \cap g X g^{-1}  \subseteq \{e\}.$$
    \item We call $g\in G$ a {\em super-switching element for $X$} if
    $$ X \cap \big(g Xg\cup g Xg^{-1} \cup g^{-1} Xg \cup g^{-1} Xg^{-1}\big) \subseteq \{e\}.
    $$
  \end{itemize}
\end{definition}
\bigskip
Note that since $X$ is symmetric, $g\in G$ is a switching element for $X$ if and only if $g^{-1}$ is a switching element for $X$. 

\begin{claim}
\label{claim:super-switching}
  Let $X$ be a finite symmetric subset of a group $G$ and let $g\in G$ be a super-switching element for $X$. If $g^{w_1} x g^{w_2} = y$ for $x,y\in X$ and $w_1, w_2\in \{-1, +1\}$, then $x=y=e$.
\end{claim}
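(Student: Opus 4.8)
The plan is to prove the contrapositive-flavored statement directly by case analysis on $(w_1, w_2)$, reducing each case to the defining property of a super-switching element. Suppose $g^{w_1} x g^{w_2} = y$ with $x, y \in X$ and $w_1, w_2 \in \{-1, +1\}$. Rearranging, we get $x = g^{-w_1} y g^{-w_2}$, which exhibits $x$ as a member of $X \cap (g^{-w_1} X g^{-w_2})$. Since $w_1, w_2 \in \{-1,+1\}$, the product $g^{-w_1} X g^{-w_2}$ is one of the four sets $gXg$, $gXg^{-1}$, $g^{-1}Xg$, $g^{-1}Xg^{-1}$ (using that $g^{-1}$ is also a super-switching element when $g$ is — or, more simply, just noting that $\{-w_1\}$ ranges over $\{-1,+1\}$ as $w_1$ does). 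Hence $x$ lies in the intersection appearing in the definition of a super-switching element, which is contained in $\{e\}$, so $x = e$.

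Once $x = e$ is established, the relation $g^{w_1} x g^{w_2} = y$ becomes $g^{w_1} g^{w_2} = y$, i.e. $y = g^{w_1 + w_2}$. The exponent $w_1 + w_2$ is either $0$, $+2$, or $-2$. If $w_1 + w_2 = 0$ then $y = e$ and we are done. If $w_1 + w_2 = \pm 2$, then $y = g^{\pm 2}$, and I claim this also forces $y = e$: indeed, $y \in X$, and $y = g^{\pm 2} = g^{\pm 1} e g^{\pm 1} = g^{\pm 1} x g^{\pm 1}$ with $x = e \in X$ (since $X$ is symmetric, hence nonempty — well, if $X$ is empty the statement is vacuous, and if $X$ is nonempty and symmetric then $e$ need not be in $X$; but here we already know $x = e \in X$ from the first paragraph). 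Thus $y \in X \cap (g^{\pm 1} X g^{\pm 1})$, which is again one of the four sets in the definition, so $y \in \{e\}$ and $y = e$.

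I expect the only subtlety to be bookkeeping: making sure that as $w_1$ ranges over $\{\pm 1\}$ the exponent $-w_1$ also ranges over $\{\pm 1\}$, so that the four formal sets $g^{-w_1} X g^{-w_2}$ coincide exactly with the four sets $gXg, gXg^{-1}, g^{-1}Xg, g^{-1}Xg^{-1}$ listed in the definition. This is immediate. The main (and essentially only) obstacle is therefore just organizing the case split cleanly; there is no real difficulty, and the claim should follow in a few lines from two applications of the super-switching property — first to produce $x = e$, then to produce $y = e$.
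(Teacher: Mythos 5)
Your proof is correct and follows the same basic approach as the paper: apply the super-switching property twice, once for $x$ and once for $y$, using the rearrangement $x = g^{-w_1} y g^{-w_2}$. The paper does it in the opposite order and more directly — it first concludes $y = e$ from $y = g^{w_1} x g^{w_2} \in gXg \cup gXg^{-1} \cup g^{-1}Xg \cup g^{-1}Xg^{-1}$ together with $y \in X$, then gets $x = e$ from the rearranged identity; this sidesteps your detour through $y = g^{w_1 + w_2}$, the case split on the exponent, and the need to observe $e \in X$.
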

\begin{proof}
  Let $g^{w_1} x g^{w_2} = y$ for $x,y\in X$ and $w_1, w_2\in \{-1, +1\}$. Since 
  $$y=g^{w_1} x g^{w_2} \in \big(g Xg\cup g Xg^{-1} \cup g^{-1} Xg \cup g^{-1} Xg^{-1}\big)$$
  and $y\in X$, it follows from the definition of a super-switching element for $X$ that $y=e$.
  
  From $g^{w_1} x g^{w_2} = y$, we get $g^{-w_1} y g^{-w_2} = x$. So, by symmetry, the same argument shows $x=e$.
\end{proof}

\begin{proposition}
\label{prop:switching-element}
  Let $G$ be a discrete (not necessarily countable) amenable ICC group, and let $X$ be a finite symmetric subset of $G$. The set of super-switching elements for $X$ is infinite.
\end{proposition}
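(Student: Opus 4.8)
The plan is to use amenability to find, for any finite set $F \subseteq G$, an element $g$ avoiding $F$ in a suitable sense, and then iterate to produce infinitely many super-switching elements. The key observation is that the condition ``$g$ is a super-switching element for $X$'' fails only if $g$ lies in one of finitely many ``bad'' sets determined by $X$; specifically, $g^{w_1} x g^{w_2} = y$ with $x, y \in X$ not both $e$ forces $g$ into a set of the form $\{g : g x g^{w} \in X\}$ or a conjugate-type translate thereof, for $x \in X \setminus \{e\}$, $y \in X$, and signs $w, w_1, w_2$. So it suffices to show that the union over all such $(x,y,w_1,w_2)$ of these bad sets is not all of $G$, and in fact is ``small'' enough that its complement is infinite.

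First I would fix $x \in X \setminus \{e\}$ and analyze the set $B_{x,y,w_1,w_2} = \{g \in G : g^{w_1} x g^{w_2} = y\}$. When $w_1 = w_2$, this is $\{g : g x^{\pm 1} g = y^{\pm 1}\}$, and when $w_1 = -w_2$ it is $\{g : g x^{\pm 1} g^{-1} = y^{\pm 1}\}$, i.e. a coset of (or empty, or a translate related to) the centralizer $C_G(x)$. The ICC hypothesis is exactly what controls the conjugation case: since the conjugacy class of $x \neq e$ is infinite, $[G : C_G(x)] = \infty$, so $\{g : g x g^{-1} = y\}$ is either empty or a single left coset of $C_G(x)$, which has infinite index — hence its complement is infinite, and more importantly a finite union of such cosets (over the finitely many $y \in X$ and the four sign patterns, and the finitely many $x \in X$) still has infinite complement, since a group is never a finite union of cosets of infinite-index subgroups (a Neumann-type argument). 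The remaining case $w_1 = w_2$, where the equation is $gxg = y$ or $gx^{-1}g = y$ and so on, is where amenability enters: the set $\{g : gxg = y\}$ is contained in a single coset of the centralizer only after a substitution $g \mapsto g$ fixing one solution, $\{g : g x g = y\} = g_0 \{h : h x h = x\}$ when nonempty, and $\{h : hxh = x\}$ lies in $\{h : h x h x^{-1} = x h^{-1} x^{-1} \cdot x^2 \cdot \ldots\}$ — more cleanly, if $g_0 x g_0 = y$ then $g = g_0 h$ solves $gxg=y$ iff $g_0 h x g_0 h = g_0 x g_0$ iff $h x g_0 h = x g_0$, i.e. $h (x g_0 h (x g_0)^{-1}) = e$ after rearranging, which again pins $h$ to a coset of a centralizer-type subgroup; one checks this subgroup has infinite index using ICC, so the same finite-union-of-infinite-index-cosets argument applies.

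Actually, to keep the argument uniform and robust I would instead run it directly via amenability and a Følner/counting argument: take a Følner sequence $(F_n)$; for each bad equation the set of solutions $g$ in $F_n$ is $O(|F_n|/[G:C_G(x)])$ up to boundary terms when the relevant subgroup has infinite index, hence $o(|F_n|)$, so the union of all finitely many bad sets meets $F_n$ in $o(|F_n|)$ elements and in particular misses infinitely many $g$. Either way, the engine is: (i) ICC $\Rightarrow$ all relevant centralizer-like stabilizers have infinite index, and (ii) amenability (or just B.H. Neumann's lemma, which needs no amenability at all — though the proposition as stated includes amenability, perhaps for the Følner-counting version or for later use) $\Rightarrow$ $G$ is not covered by finitely many cosets of infinite-index subgroups.

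The main obstacle I anticipate is the bookkeeping of the ``quadratic'' equations $g^{w_1} x g^{w_2} = y$ with $w_1 = w_2$: unlike the conjugation case, the solution set is not obviously a coset of a subgroup, and one must show via a change of variables that it is contained in a coset of an infinite-index subgroup (identifying that subgroup as a centralizer or a stabilizer and invoking ICC). I would isolate this as a small lemma: \emph{for $x \neq e$ and any $y$, the set $\{g : gxg = y\}$, if nonempty, is a left coset of $\{h : hxh^{-1} = x'\} $ for an appropriate $x'$ conjugate to $x$} — after which everything reduces to the clean statement that a countable group is not a finite union of cosets of infinite-index subgroups, and the super-switching elements, being the complement of such a finite union, form an infinite set.
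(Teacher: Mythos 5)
Your decomposition into the conjugation case (handled by ICC alone) and the quadratic case $g^{w_1} x g^{w_2}=y$ with $w_1=w_2$ matches the paper's, and your change of variables $g=g_0h$ leading to $h(xg_0)h=xg_0$ is exactly right. But the step that follows has a genuine gap, and it is precisely the crux of the problem.

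From $h(xg_0)h=xg_0$ one gets $(xg_0)\,h\,(xg_0)^{-1}=h^{-1}$, and squaring shows $h\in C_G\bigl((xg_0)^2\bigr)$. The set $\{h: h z h = z\}$ (for $z=xg_0$) is \emph{not} a subgroup and \emph{not} a coset of $C_G(x)$ or any conjugate; it is only contained in $C_G(z^2)$. Your proposed lemma, that $\{g:gxg=y\}$ is a left coset of $\{h:hxh^{-1}=x'\}$ with $x'$ conjugate to $x$, is therefore not what the computation gives. More seriously, $C_G\bigl((xg_0)^2\bigr)$ need \emph{not} have infinite index: if $(xg_0)^2=e$ it is all of $G$. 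So the assertion ``one checks this subgroup has infinite index using ICC'' fails, and consequently neither the Neumann-lemma version nor the F{\o}lner-counting version of step (ii) applies to these quadratic solution sets, since they are not contained in finitely many cosets of infinite-index subgroups.

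The paper resolves this by a dichotomy using an invariant mean (so amenability is used essentially, not as an optional convenience). Let $A=\{g:g^2=e\}$. If $A$ has positive Banach density, then so does $A\cap S$ (where $S$ is the density-one set of switching elements), and any switching involution is automatically super-switching, so one is done. If instead $A$ has density zero, then for each $(x,y)\neq(e,e)$ and each $g_0$ in $S_{x,y}=\{g\in S: gxg=y\}$, the containment $g_0^{-1}S_{x,y}\subseteq C_G\bigl((xg_0)^2\bigr)$ together with ICC forces $(xg_0)^2=e$ whenever $S_{x,y}$ has positive density; this puts $xS_{x,y}\subseteq A$, contradicting $d(A)=0$. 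This ``either many involutions (which already work) or the quadratic equations force involutions (which there are too few of)'' argument is the missing idea in your sketch, and it is what makes amenability more than a convenience here.
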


\begin{proof}[Proof of Proposition~\ref{prop:switching-element}]
  Fix an invariant finitely additive probability measure $d$ on $G$. For $A \subseteq G$, we call $d(A)$ the density of $A$. We will need the fact that infinite index subgroups have zero density, and that $d(A) = 0$ for every finite subset $A \subset G$.

  Let $\cent{x}$ be the centralizer of a non-identity $x\in X$. Then, since $X$ is finite, there is a finite set of cosets of $\cent{x}$ that includes all $g \in G$ such that $g^{-1}xg\in X$. So, non-switching elements for $X$ are in the union of finitely many cosets of subgroups with infinite index, since $G$ is ICC. This means that the set of non-switching elements for $X$ has zero density, and so the set $S$ of switching elements for $X$ has density one.
  
  Let $T$ be the set of all super-switching elements for $X$.  Let $A \subseteq G$ be the set of involutions $\{g\in G \ | \ g^2=e\}$. 
  
  If $d(A) > 0$, then $d(A\cap S)>0$. On the other hand, for any $g\in A\cap S$, since $g$ is switching for $X$ and $g^{-1}=g$, $g$ is super-switching for $X$. Hence $A\cap S \subseteq T$. This shows that if $d(A)>0$, then $d(T) \geq d(A\cap S) > 0$, and so we are done.
  
  So, we can assume that $d(A)=0$. For any $x, y\in X$, let $S_{x,y} = \{ g\in S\ | \ gxg = y\}$. Note that
  $$T = S \setminus \bigcup_{\substack{x, y\in X\\ (x,y)\neq(e,e)}} S_{x,y}.$$
  It is thus enough to be shown that each $S_{x,y}$ has zero density when $(x,y)\neq(e,e)$. So assume for the sake of contradiction that $d(S_{x,y})>0$. Fix $g\in S_{x,y}$. We have the following for all $h\in g^{-1}S_{x,y}$.
    \begin{align*}
      gxg=y=gh x gh\implies&\ (xg)= h(xg)h\\
      \implies&\ (xg)^{-1} h^{-1} (xg) = h\\
      \implies&\ h = (xg)^{-1} h^{-1} (xg)\\
      &\quad = (xg)^{-1} [(xg)^{-1} h^{-1} (xg)]^{-1} (xg)\\
      &\quad = (xg)^{-2} h (xg)^2\\
      \implies&\ \text{$h$ is in the centralizer of $(xg)^2$.}
    \end{align*}
    So, the centralizer of $(xg)^2$ includes $g^{-1} S_{x,y}$, which has a positive density. So, the centralizer of $(xg)^2$ has finite index. This implies that $(xg)^2 = e$, because in an ICC group only the identity can have a finite index
centralizer. Hence $x g \in A$ for all $g \in S_{x,y}$. So $x S_{x,y} \subseteq A$. Hence $S_{x,y}$ also has zero density, which is a contradiction.
\end{proof}

\subsection{A Heavy-Tailed Probability Distribution on $\N$.}\label{sec:heavy-tailed} Here we state and prove a lemma about the existence of a probability distribution on $\N=\{1,2,\ldots\}$ such that infinite i.i.d.\ samples from this measure have certain properties. We will use this distribution in the proof of Proposition~\ref{prop:non-invariant}.

\begin{lemma}
\label{lemma:random-string}
Let $p$ be the following probability measure on $\N$:
  $p(n) = cn^{-5/4}$, where $1/c=\sum_{n=1}^\infty n^{-5/4}$.
  Then $p$ has finite entropy  and the following property: for any $\eps>0$ there exist constants $K_\eps,N_\eps\in\N$ such that for any natural number $m\geq K_\eps$ there exists an $E_{\eps,m}\subseteq \N^m$ such that:
  \begin{enumerate}
    \item $p^{\times m}(E_{\eps,m}) \geq 1-\eps$, where $p^{\times m}$ is the $m$-fold product measure $p \times \cdots \times p$.
    \item For any $s=(s_1,\ldots,s_m)\in E_{\eps,m}$, the maximum of $\{s_1,\ldots,s_{K_\eps}\}$ is at most $N_\eps$.
    \item For any $s=(s_1,\ldots,s_m)\in E_{\eps,m}$ and for any $K_\eps\leq k\leq m$, the maximum of $\{s_1,\ldots,s_k\}$ is at least $k^2$.
    \item For any $s=(s_1,\ldots,s_m)\in E_{\eps,m}$ and for any $K_\eps\leq k\leq m$, the maximum of $\{s_1,\ldots,s_k\}$ appears in $(s_1,\ldots,s_k)$ only once.
  \end{enumerate}
\end{lemma}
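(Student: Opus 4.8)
The plan is to realize $E_{\eps,m}$ as the complement, inside $\N^m$, of a union of three ``bad'' events $B^{(m)}_2,B^{(m)}_3,B^{(m)}_4$ corresponding to the failure of conditions (2), (3) and (4), and to choose first $K_\eps$ and then $N_\eps$ so that each bad event has $p^{\times m}$-probability at most $\eps/3$, uniformly over $m\ge K_\eps$. Writing $s_1,s_2,\ldots$ for an i.i.d.\ sequence with law $p$, everything is driven by two elementary tail estimates obtained by comparison with integrals: $\sum_{n\ge M}p(n)=O(M^{-1/4})$ and $\sum_{n\ge M}p(n)^2=O(M^{-3/2})$. Finite entropy is also immediate: $-\log p(n)=-\log c+\tfrac54\log n$, so $H(p)=-\log c+\tfrac54 c\sum_n n^{-5/4}\log n<\infty$ because $5/4>1$.

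First I would treat condition (3). Let $B^{(m)}_3$ be the set of $s$ for which $\max\{s_1,\ldots,s_k\}<k^2$ for some $K_\eps\le k\le m$. A union bound over $k$ together with independence gives $p^{\times m}(B^{(m)}_3)\le\sum_{k\ge K_\eps}\big(1-\Pr{s_1\ge k^2}\big)^k$. Since $\Pr{s_1\ge k^2}$ is of order $k^{-1/2}$, each summand is at most $e^{-c'\sqrt k}$ for a fixed $c'>0$; hence the sum is finite, independent of $m$, and tends to $0$ as $K_\eps\to\infty$. Fix $K_\eps$ large enough that it is $\le\eps/3$.

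The hard part will be condition (4): the naive union bound — over $k$ of the probability that the running maximum $\max\{s_1,\ldots,s_k\}$ is attained twice — diverges, because that probability is only of order $1/k$. The fix is that it suffices to bound the bad event on the complement of $B^{(m)}_3$, and there a repeated maximum at time $k$ forces a collision at a large value that can be charged to the \emph{larger} of the two colliding indices: if $i<j\le k$ and $s_i=s_j=\max\{s_1,\ldots,s_k\}$, then $s_j=\max\{s_1,\ldots,s_j\}$, which off $B^{(m)}_3$ is $\ge j^2$ when $j\ge K_\eps$, and is $\ge k^2\ge K_\eps^2$ when $j<K_\eps$. Thus, letting $B^{(m)}_4$ be the set of $s$ for which $\max\{s_1,\ldots,s_k\}$ is attained more than once among $s_1,\ldots,s_k$ for some $K_\eps\le k\le m$,
\[
  B^{(m)}_4\setminus B^{(m)}_3 \ \subseteq\ \{\exists\, i<j \text{ with } K_\eps\le j\le m:\ s_i=s_j\ge j^2\}\ \cup\ \{\exists\, i<j<K_\eps:\ s_i=s_j\ge K_\eps^2\},
\]
whose probability is at most $\sum_{j\ge K_\eps}(j-1)\sum_{n\ge j^2}p(n)^2+\binom{K_\eps}{2}\sum_{n\ge K_\eps^2}p(n)^2$, which by the $O(M^{-3/2})$ tail bound is $O\big(\sum_{j\ge K_\eps}j^{-2}\big)+O(K_\eps^{-1})$ — finite, independent of $m$, and $\to 0$ as $K_\eps\to\infty$. (The exponent $5/4$ is precisely what makes \emph{both} the (3)-sum and the (4)-sum converge: it must lie strictly between $1$ and $3/2$.) Enlarging $K_\eps$, I would arrange that $p^{\times m}(B^{(m)}_4\setminus B^{(m)}_3)\le\eps/3$ as well.

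Finally, with $K_\eps$ now fixed, let $B^{(m)}_2$ be the set of $s$ with $\max\{s_1,\ldots,s_{K_\eps}\}>N_\eps$; its probability is at most $K_\eps\sum_{n>N_\eps}p(n)$, which tends to $0$ as $N_\eps\to\infty$, so choose $N_\eps$ — necessarily $\ge K_\eps^2$, which is consistent with (3) at $k=K_\eps$ — making it $\le\eps/3$. Setting $E_{\eps,m}:=\N^m\setminus(B^{(m)}_2\cup B^{(m)}_3\cup B^{(m)}_4)$, conditions (2), (3), (4) hold by construction, and (1) holds because $p^{\times m}(B^{(m)}_2\cup B^{(m)}_3\cup B^{(m)}_4)\le p^{\times m}(B^{(m)}_2)+p^{\times m}(B^{(m)}_3)+p^{\times m}(B^{(m)}_4\setminus B^{(m)}_3)\le\eps$. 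The only non-routine ingredient is the index-charging reduction used for condition (4); everything else is elementary tail estimation.
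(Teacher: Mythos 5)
Your proof is correct, and it takes a genuinely different route from the paper's. The paper works with the infinite i.i.d.\ sequence and shows by Borel--Cantelli that two bad events, $A_k = \{M_k < k^2\}$ and $B_k = \{s_{\mathrm{next}(k)} = M_k\}$ (where $\mathrm{next}(k)$ is the first index after $k$ at which the running maximum $M_k$ is matched or exceeded), occur only finitely often; the crux is a conditional estimate $\Pr{B_k \mid M_k} \le 1/(4M_k)$ obtained by summing a geometric series, which combined with $\E{1/M_k}$ being summable gives Borel--Cantelli. The a.s.\ finite index $\mathrm{ind}$ past which neither event occurs then yields $K_\eps$, $N_\eps$ and $E_{\eps,m}$ by truncation. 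You instead bound the failure probability directly and uniformly in $m$ by a union bound over finite windows, and the genuinely new idea is your handling of property (4): rather than analyzing $\mathrm{next}(k)$, you observe that off the (3)-bad event a repeated running maximum at time $k$ forces a collision $s_i = s_j$ with $j$ the larger index, and that this common value must be at least $j^2$ (or $K_\eps^2$ if $j < K_\eps$), so the union bound over pairs $(i,j)$ becomes $\sum_j (j-1)\sum_{n\ge j^2} p(n)^2 = O\bigl(\sum_j j^{-2}\bigr)$, which converges. This ``charge the collision to the larger index and use the running-max lower bound'' device replaces the paper's conditional computation with an unconditional tail estimate and is arguably more elementary; the paper's Borel--Cantelli formulation is perhaps cleaner in that it decouples the existence of a good a.s.\ finite threshold from the later choice of $\eps$. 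Your remark that $5/4$ must lie strictly in $(1, 3/2)$ is a nice observation that the paper leaves implicit, and the only cosmetic slip (writing $\binom{K_\eps}{2}$ where $\binom{K_\eps - 1}{2}$ is exact) does not affect the bound.
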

\begin{proof}
   It is
  straightforward to see that $p$ has finite entropy.
  
  Let $s=(s_1,s_2,\ldots)\in \N^\infty$ have distribution
  $p^{\times \infty}$; i.e., $s$ is a sequence of i.d.d.\ random variables with distribution $p$. Since each
  $s_i$ has distribution $p$, for each $n\in\N$ we have:
  \begin{align}
  \label{eq:p}
    \Pr{s_i \geq n} = \sum_{m=n}^\infty p(m) = c \sum_{m=n}^\infty m^{-5/4} \geq c \int_{n}^\infty x^{-5/4} \dd x = 4c n^{-1/4}.
  \end{align}
  
  For $k\geq 1$, let
  \begin{align*}
    M_k \coloneqq & \max\{s_1,\ldots,s_k\},
  \end{align*}
  and let
  \begin{align*}
    \text{next}(k) \coloneqq & \min\{i > k \ | \ s_i \geq M_k \}.
  \end{align*}
  In words, $\text{next}(k)$ is the first index $i > k$ for which
  $s_i$ matches or exceeds $M_k$.

  We first show that with probability one, $M_k \geq k^2$
  for all $k$ large enough. To this end, let $A_k$ be the event that
  $M_k < k^2$.  We have:
  \begin{align*}
    \Pr{A_k} = & \ \Pr{s_i < k^2 \ \forall i\in\{1,\ldots,k\}}\\
    =& \ (1-\Pr{s_1 < k^2})^k\\
    \leq & \ (1-4c(k^2)^{-1/4})^k\\
    \leq& \ e^{-4ck^{1/2}}.
  \end{align*}
  Since the sum of these probabilities is finite, by Borel-Cantelli we get that
  \begin{align*}
    \Pr{A_k\ \text{infinitely often}} = 0.
  \end{align*}
  Hence $M_k \geq k^2$ for all $k$ large enough, almost
  surely. Furthermore, the expectation of $1/M_k$ is small:
  \begin{align}
    \label{eq:emk}
    \E{\frac{1}{M_k}}
    =
    \E{\frac{1}{M_k}}{A_k}\Pr{A_k} +
      \E{\frac{1}{M_k}}{\neg A_k}\Pr{\neg A_k}  
    \leq e^{-4ck^{1/2}} + \frac{1}{k^2}\cdot
  \end{align}

  Next, we show that, with probability one, $s_{\text{next}(k)} >
  M_k$ for all $k$ large enough. That is, for large enough
  $k$, the first time that $M_k$ is matched or exceeded
  after index $k$, it is in fact exceeded.

  Let $B_k$ be the event that $s_{\text{next}(k)} = M_k$. We
  would like to show that this occurs only finitely often. Note that
  \begin{align*}
    \Pr{B_k}{M_k}
    &= \Pr{s_{\text{next}(k)}=M_k}{M_k}\\
    &= \sum_{i=k+1}^\infty\Pr{s_i=M_k, \text{next}(k)=i}{M_k}.
  \end{align*}
  Applying the definition of $\text{next}(k)$ yields
  \begin{align*}
    \Pr{B_k}{M_k}
    &= \sum_{i=k+1}^\infty\Pr{s_i=M_k, s_{k+1},\ldots,s_{i-1}<M_k}{M_k}.
  \end{align*}
  By the independence of the $s_i$'s we can write this as
  \begin{align*}
    \Pr{B_k}{M_k}
    &= \sum_{i=k+1}^\infty\Pr{s_i=M_k}{M_k}\prod_{n=1}^{i-(k+1)}\Pr{s_{k+n}<M_k}{M_k}\\
    &= \sum_{i=k+1}^\infty\frac{c}{M_k^{5/4}}\Pr{s_{k+1}<M_k}{M_k}^{i-(k+1)}.
  \end{align*}
  By~\eqref{eq:p}, $\Pr{s_{k+1}<M_k}{M_k} \leq
  1-4 c M_k^{-1/4}$. Hence
  \begin{align*}
    \Pr{B_k}{M_k}
    \leq \frac{c}{M_k^{5/4}} \cdot \frac{1}{4 c M_k^{-1/4}}
    =\frac{1}{4 M_k}\cdot 
  \end{align*}
  Using~\eqref{eq:emk} it follows that
  \begin{align*}
    \Pr{B_k} = \E{\Pr{B_k}{M_k}} \leq \E{\frac{1}{4M_k}} \leq \frac{1}{4}e^{-4ck^{1/2}} + \frac{1}{4k^2}.
  \end{align*}
  Hence $\sum_k \Pr{B_k} < \infty$, and so by Borel-Cantelli $B_k$
  occurs only finitely often.

  Since $A_k$ and $B_k$ both occur for only finitely many $k$, the (random) index $\text{ind}'$ at which they stop occurring is almost surely finite, and is given by
  $$
    \text{ind}' = \min\{\ell \in \N \,:\, s \not \in A_k \cup B_k \text{ for all } k \geq \ell\}.
  $$ 
  Let
  $$\text{ind} = \text{next}(\text{ind}').$$ 
  Hence for $k \geq \text{ind}$, $M_k \geq k^2$ and $M_k$ appears in $(s_1,\ldots,s_k)$ only once.
  
  Fix $\eps > 0$. Since $\text{ind}$ is almost surely finite, then for large enough constants $K_\eps\in\N$ and $N_\eps \in \N$ the event $$E_\eps = \{\text{ind} \leq K_\eps\text{ and } M_{K_\eps}\leq N_\eps\}$$ has probability at least $1-\eps$, and additionally, conditioned on $E_\eps$ it holds that
  $k \geq \text{ind}$ for all $k \geq K_\eps$, and hence $M_k \geq k^2$ and $M_k$ appears in $(s_1,\ldots,s_k)$ only once. Therefore, if for $m \geq K_\eps$ we let $E_{\eps,m}$ be the projection of $E_\eps$ to the first $m$ coordinates, then $E_{\eps,m}$ satisfies the desired properties.
\end{proof}

\subsection{Proof of Proposition~\ref{prop:non-invariant}}
  Let $\frac{1}{8}>\eps>0$. Let $p$, $K_\eps\in \N,N_\eps \in \N$, and $E_{\eps,m}\subseteq \N^m$ be the probability measure, the constants, and the events from Lemma~\ref{lemma:random-string}. To simplify notation let $N=N_\eps$ and $K=K_\eps$. 
  
  Let $G = \{a_1,a_2,\ldots\}$, where $a_1=a_2=\cdots=a_N=e$. We define $(g_n)_{n}$, $(A_n)_{n}$, $(B_n)_n$ and $(C_n)_n$ recursively. Given $g_1,\ldots,g_{n}$, let $A_n = \{g_n, g_n^{-1},a_n, a_n^{-1}\}$ and $B_n=\cup_{i\leq n} A_i$. Denote $C_n = B_n\cup\{h^{-1},h\}$.  Note that $A_n$, $B_n$, and $C_n$ are finite and symmetric for any $n\in \N$. Let $g_1=g_2=\ldots=g_N=e$. For $n +1 >  N$, given $C_n$, let  $g_{n+1} \in G$ be a super-switching element for $(C_n)^{2n+1}$ which is not in $(C_n)^{8n+1}$. The existence of such a super-switching element is guaranteed by Proposition~\ref{prop:switching-element} and the facts that $(C_n)^{2n+1}$ is a finite symmetric subset of $G$ and that $(C_n)^{8n+1}$ is finite.

%  , and let $g_1=g_2=\ldots=g_N=e$. For any $n\leq N$ let $A_n = \{g_n, g_n^{-1},a_n, a_n^{-1}\}$ and $B_n=\cup_{i\leq n} A_i$. Obviously for any $n\leq N$ we have $A_n = B_n = \{e\}$. 
  
%  For $n=N+1,N+2,\ldots$ we will define $g_n$ inductively and set $A_n=\{g_n,g_n^{-1},a_n,a_n^{-1}\}, B_n=\cup_{i\leq n} A_i\cup \{e\}$. For any $n\in \N$ let $C_n = B_n\cup\{h^{-1},h\}$. Note that $A_n$, $B_n$, and $C_n$ are finite and symmetric for any $n\in \N$. Now we explain how we define $g_{N+1},g_{N+2},\ldots$. Let $n+1>N$, and assume that $g_1,\ldots,g_n\in G$ are defined. Let $g_{n+1} \in G$ be a super-switching element for $(C_n)^{2n+1}$ which is not in $(C_n)^{8n+1}$. The existence of such super-switching element is guaranteed by Proposition~\ref{prop:switching-element} and the fact that $(C_n)^{2n+1}$ is a finite symmetric subset of $G$ and $(C_n)^{8n+1}$ is finite.
  
  For $n \in \N$, define a symmetric probability measure $\mu_n$ on $A_n$ by
  $$\mu_n = \eps2^{-n} (\frac{1}{2}\delta_{a_n} + \frac{1}{2} \delta_{a_n^{-1}}) + (1-\eps2^{-n}) (\frac{1}{2}\delta_{g_n} + \frac{1}{2}\delta_{g_n^{-1}}).$$
  Here $\delta_g$ is the point mass on $g \in G$.
  Finally, let 
  $$\mu = \sum_{n=1}^{\infty} p(n) \mu_n.$$ 
  Obviously $\mu$ is symmetric and $\supp \mu=G$. Since $p$ has finite entropy and each $\mu_n$ has support of size at most 4, it follows easily that $\mu$ has finite entropy. 
   
  We want to show that 
  $$
  \lim_{m\to\infty}\norm{h\mu^{*m}- \mu^{*m}} > 0.
  $$
  
  Fix $m\in \N$ larger than $K$ and $N$. For each $n \in \N$ define $f_n:\{1,2,3,4\}\to A_n$ by
  \begin{align*}
    f_n(1) = a_n,\
    f_n(2) = a_n^{-1},\
    f_n(3) = g_n,\
    f_n(4) = g_n^{-1},
  \end{align*}
  and define $\nu_n:\{1,2,3,4\}\to [0,1]$ by
  \begin{align*}
    \nu_n(1) = \nu_n(2) = \frac{1}{2} \eps2^{-n},\
    \nu_n(3) = \nu_n(4) = \frac{1}{2} (1-\eps2^{-n}).
  \end{align*}
  Let
  $$\Omega = \{(s,w)\ | \ s\in\N^m, \ w\in\{1,2,3,4\}^m \}.$$

  We define the measure $\eta$ on the countable set $\Omega$ by specifying its values on the singletons:
  $$\eta(\{(s,w)\}) = p^{\times m}(s)\ \nu_{s_1}(w_1)\ \nu_{s_2}(w_2)\ \ldots\ \nu_{s_m}(w_m).$$
  It follows immediately from this definition that $\eta$ is a probability measure.
  
  Define $r:\Omega \to G$ by
  $$r(s,w) = f_{s_1}(w_1) f_{s_2}(w_2) \ldots f_{s_m}(w_m).$$
  It is not difficult to see that $r_{*}\eta = \mu^{*m}$, and so we need to show that $\norm{h r_*\eta - r_*\eta}$ is uniformly bounded away from zero for $m$ larger than $K$ and $N$.
  
  Recall that $E_{\eps,m}\subseteq \N^m$ is the event given by Lemma~\ref{lemma:random-string}. Fix $s\in E_{\eps,m}$. Define
  \begin{align*}
    i_{s,1} =&\ \min\{j\in \{1\ldots,m\} \ | \ s_j > N\},\\
    i_{s,2} =&\ \min\{j > i_{s,1} \ | \ s_j \geq s_{i_{s,1}}\},\\
    & \vdots \\
    i_{s,l(s)} =&\ \min\{j > i_{s,l(s)-1} \ | \ s_j \geq s_{i_{s,l(s)-1}} \}.
  \end{align*}
  %So, $i_{s,1}$ is the index of the first entry of $(s_1,\ldots,s_m)$ which is larger than $N$, $i_{s,2}$ is the index of the next entry that is at least as large, etc. 
  Note that by the second property of $E_{\eps,m}$ in Lemma~\ref{lemma:random-string}, we know that
  \begin{align*}
    K < i_{s,1} < i_{s,2} < \cdots < i_{s,l(s)},
  \end{align*}
  and by the fourth property,
  \begin{align*}
    N< s_{i_{s,1}} < s_{i_{s,2}} < \cdots < s_{i_{s,l(s)}} = \max\{ s_1,\ldots, s_m\}.
  \end{align*}
  Let 
  $$W_\eps^{s}=\{w\in\{1,2,3,4\}^m \ | \ \forall k\leq l(s) \ w_{i_{s,k}}=3,4 \}.$$ 
  
  For $s \in \N^m$ let $\eta_s$ be the measure $\eta$, conditioned on the first coordinate equalling $s$. I.e., let
    $$
  \eta_s(A) = \frac{\eta(A \cap \Omega^{s})}{\eta(\Omega^{s})},
  $$
  where $\Omega^{s} = \{s\}\times\{1,2,3,4\}^m \subseteq \Omega$.

  Then
  \begin{align*}
    \eta_s(\{s\}\times W_\eps^{s}) 
    =&\ 1 - \eta_s(\{w_{i_{s,1}}=1,2; \text{ or } w_{i_{s,2}}=1,2;\ \ldots; \text{ or } w_{i_{s,l(s)}}=1,2\ \})\\
    \geq&\ 1 - \sum_{k=1}^{l(s)} \eta_s(\{w_{i_{s,k}}=1,2\})\\
    =&\ 1 - \sum_{k=1}^{l(s)} \eps2^{-s_{i_{s,k}}}\\
    \geq&\ 1 - \sum_{j=1}^\infty \eps2^{-j}\\
    =&\ 1-\eps,
  \end{align*}
  where the first inequality follows from the union bound, and the last inequality holds since $s_{i_{s,1}} < s_{i_{s,2}} < \cdots < s_{i_{s, l(s)}}$.
  
  Finally, let
  $$\Omega_\eps = \{(s,w)\in\Omega \ | \ s\in   E_{\eps,m}, \ w\in W_\eps^{s} \}.$$
  By the above, and since $\eta(E_{\eps,m}\times\{1,2,3,4\}^m) \geq 1-\eps$ by Lemma~\ref{lemma:random-string}, we have shown that 
  \begin{align*}
    \eta(\Omega_\eps)\geq\ (1-\eps)(1-\eps)>\ 1-2\eps.
  \end{align*}
  
  \begin{claim}
  \label{claim:E_eps}
    For any $\alpha, \beta\in \Omega_\eps$, we have $h r(\alpha) \neq r(\beta)$.
  \end{claim}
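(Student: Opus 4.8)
The plan is to argue by contradiction. Suppose $\alpha=(s,w)$ and $\beta=(s',w')$ lie in $\Omega_\eps$ but $h\,r(\alpha)=r(\beta)$. The engine of the proof is to isolate the largest index $n^{*}$ of a generator $g_n$ occurring in $r(\alpha)$ or $r(\beta)$ and ``peel it off.'' Since $s\in E_{\eps,m}$, properties 2 and 4 of Lemma~\ref{lemma:random-string} give $K<i_{s,1}<\dots<i_{s,l(s)}$ and $N<s_{i_{s,1}}<\dots<s_{i_{s,l(s)}}=\max_j s_j$, with $\max_j s_j$ occurring only once in $s$; and $w\in W_\eps^{s}$ forces the factor of $r(\alpha)=f_{s_1}(w_1)\cdots f_{s_m}(w_m)$ at each record position $i_{s,k}$ to be $g_{s_{i_{s,k}}}^{\pm1}$. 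Every other factor uses an index $\le n^{*}-1$ (a non-maximal record value, or a non-record value, which by definition of the $i_{s,k}$ lies strictly below the preceding record value), hence lies in $C_{n^{*}-1}$. So if $\max_j s_j=n^{*}$ then $r(\alpha)=A_1\,g_{n^{*}}^{\pm1}\,A_2$ with $A_1,A_2$ products of fewer than $m$ elements of $C_{n^{*}-1}$, and symmetrically $r(\beta)=B_1\,g_{n^{*}}^{\pm1}\,B_2$ if $\max_j s'_j=n^{*}$. I will also use that $h\in C_n$ for all $n$, that $(C_n)^{j}\subseteq(C_n)^{j'}$ for $j\le j'$ (since $e\in C_n$), that $\max_j s_j\ge m^{2}$ and $\max_j s'_j\ge m^2$ by property 3, and that $g_{n^{*}}$ is super-switching for $(C_{n^{*}-1})^{2(n^{*}-1)+1}$ but does not lie in $(C_{n^{*}-1})^{8(n^{*}-1)+1}$, by construction.

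Two mechanisms drive the argument. First, if $n^{*}$ occurs on only one side — say $\max_j s_j=n^{*}>\max_j s'_j$, so $r(\beta)$ already lies in $C_{n^{*}-1}$ — then $h\,r(\alpha)=r(\beta)$ rearranges to $g_{n^{*}}^{\pm1}=(hA_1)^{-1}\,r(\beta)\,A_2^{-1}$, a product of at most $2m$ elements of $C_{n^{*}-1}$; since $n^{*}\ge m^{2}$, this number is at most $8(n^{*}-1)+1$, contradicting $g_{n^{*}}\notin(C_{n^{*}-1})^{8(n^{*}-1)+1}$. Second, if $n^{*}$ occurs on both sides, then $hA_1\,g_{n^{*}}^{a}A_2=B_1\,g_{n^{*}}^{b}B_2$ rearranges to $g_{n^{*}}^{-b}\,(B_1^{-1}hA_1)\,g_{n^{*}}^{a}=B_2A_2^{-1}$; both $B_1^{-1}hA_1$ and $B_2A_2^{-1}$ are products of at most $2m-1\le 2(n^{*}-1)+1$ elements of $C_{n^{*}-1}$, so Claim~\ref{claim:super-switching} applied with $X=(C_{n^{*}-1})^{2(n^{*}-1)+1}$ forces $B_1^{-1}hA_1=e$ and $B_2A_2^{-1}=e$, i.e.\ $hA_1=B_1$ and $A_2=B_2$.

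The relation $hA_1=B_1$ has the same form as $h\,r(\alpha)=r(\beta)$ but for strictly shorter prefixes of $\alpha$ and $\beta$, so I would phrase everything as an induction on $t+t'$ proving: for every prefix $P=f_{s_1}(w_1)\cdots f_{s_t}(w_t)$ and every prefix $P'=f_{s'_1}(w'_1)\cdots f_{s'_{t'}}(w'_{t'})$, one has $hP\ne P'$. In the base case $P=e$ or $P'=e$: if both are trivial then $hP=h\ne e=P'$; if exactly one is, say $P=e$, then $h=P'\ne e$, and writing $P'=B_1\,g_n^{\pm1}\,B_2$ at the top record of $P'$ (with $n=\max\{s'_1,\dots,s'_{t'}\}\ge (t')^{2}$ by property 3, since $P'\neq e$ forces $t'>K$) gives $g_n^{\pm1}=B_1^{-1}hB_2^{-1}\in(C_{n-1})^{t'+1}\subseteq(C_{n-1})^{8(n-1)+1}$, a contradiction. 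The inductive step is precisely the dichotomy of the previous paragraph, with the role of $m$ played by $t$ and $t'$: property 3 guarantees that the maximal index of a nontrivial prefix of length $t$ is at least $t^{2}$, which comfortably dominates the $O(t+t')$ word lengths arising in both the cancellation step and the one-sided step, so all the needed length inequalities go through with slack. Taking $P=r(\alpha)$ and $P'=r(\beta)$ yields the claim.

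The substantive part is the bookkeeping in the inductive step: one must verify at each stage that the surviving prefixes are short enough for the word being compared against $g_{n^{*}}$ to sit inside $(C_{n^{*}-1})^{2(n^{*}-1)+1}$ in the cancellation case and inside $(C_{n^{*}-1})^{8(n^{*}-1)+1}$ in the one-sided case. This is exactly where the quadratic lower bound $\max\{s_1,\dots,s_k\}\ge k^{2}$ of property 3 of Lemma~\ref{lemma:random-string}, together with the (generous) choice of exponents $2n+1$ and $8n+1$ in the construction of the $g_n$, is used; once the inequalities are written down they hold with room to spare, and the remaining manipulations are routine.
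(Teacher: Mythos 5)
Your proposal is correct and follows essentially the same strategy as the paper's proof: isolate the unique top record on each side, use the super-switching property of $g_{n^*}$ (via Claim~\ref{claim:super-switching}) to cancel it and reduce to shorter prefixes, with the one-sided case (unequal top records) ruled out by $g_{n^*}\notin (C_{n^*-1})^{8(n^*-1)+1}$. The only difference is presentational: you package the peeling as an explicit strong induction on prefix lengths with a clean base case, whereas the paper iterates the same dichotomy informally (``by the exact same argument'') and treats the $l(s)\neq l(t)$ case as a side remark, which your base case absorbs.
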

  We prove this claim after we finish the proof of the Proposition.
  
  Let $\eta_1$ be equal to $\eta$ conditioned on $\Omega_\eps$, and $\eta_2$ be equal to $\eta$ conditioned on the complement of $\Omega_\eps$. We have $\eta = \eta(\Omega_\eps) \eta_1 + (1-\eta(\Omega_\eps)) \eta_2$, and by the above claim we know $\norm{hr_*\eta_1 - r_*\eta_1} = 2$. So for $m$ larger than $K$ and $N$
  \begin{align*}
    \norm{h\mu^{*m}- \mu^{*m}} =&\ \norm{hr_*\eta - r_*\eta}\\
    =&\ \norm{\eta(\Omega_\eps)(h r_*\eta_1 - r_*\eta_1) + (1-\eta(\Omega_\eps))(h r_*\eta_2 - r_*\eta_2)}\\
    \geq&\ \eta(\Omega_\eps) \norm{h r_*\eta_1 - r_*\eta_1} - 2 (1-\eta(\Omega_\eps))\\
    \geq&\ 2(1-2\eps) -2(2\eps)= 2-8\eps,
  \end{align*}
  which is uniformly bounded away from zero since $\eps < \frac{1}{8}$. Since $\norm{h\mu^{*m}- \mu^{*m}}$ is a decreasing sequence,
  this completes the proof of Proposition~\ref{prop:non-invariant}.

\begin{proof}[Proof of Claim~\ref{claim:E_eps}]
  Let $\alpha=(s,w),\ \beta=(t,v)\in \Omega_\eps$. Hence $\max\{K,N\}< m$, $s\in E_{\eps,m}$, $t\in E_{\eps,m}$, $w\in W_\eps^{s}$, and $v\in W_\eps^{t}$. Assume that $hr(\alpha) = r(\beta)$. So, we have
  $$hf_{s_1}(w_1)\cdots f_{s_m}(w_m) = f_{t_1}(v_1)\cdots f_{t_m}(v_m).$$
  Let $K<i_1<i_2<\cdots<i_{l(s)}$ and $K<j_1<j_2<\cdots<j_{l(t)}$ be the indices we defined for $s$ and $t$ in the proof of Proposition~\ref{prop:non-invariant}. We remind the reader that the unique maximum of $(s_1,\ldots,s_m)$ is attained at $i_{l(s)}$, with a corresponding statement for $(t_1,\ldots,t_m)$ and $j_{l(t)}$. So we have
  \begin{align*}
    h &\overbrace{f_{s_1}(w_1)\cdots f_{s_{i_{l(s)}-1}}(w_{i_{l(s)}-1})}^{b_1} f_{s_{i_{l(s)}}}(w_{i_{l(s)}}) \overbrace{f_{s_{i_{l(s)}+1}}(w_{i_{l(s)}+1})\cdots f_{s_m}(w_m)}^{b_2}\\
    =&\ \underbrace{f_{t_1}(v_1)\cdots f_{t_{j_{l(t)}-1}}(v_{j_{l(t)}-1})}_{c_1} f_{t_{j_{l(t)}}}(v_{j_{l(t)}}) \underbrace{f_{t_{j_{l(t)}+1}}(v_{j_{l(t)}+1})\cdots f_{t_m}(v_m)}_{c_2}.
  \end{align*}
  Let $p = s_{i_{l(s)}} = \max\{s_1,\ldots,s_m\}$ and $q = t_{j_{l(t)}} = \max\{t_1,\ldots,t_m\}$. Since $w\in W_\eps^{s}$ and $v\in W_\eps^{t}$, we know $f_{s_{i_{l(s)}}}(w_{i_{l(s)}}) = g_p^{\pm 1}$ and $f_{t_{j_{l(t)}}}(v_{j_{l(t)}}) = g_q^{\pm 1}$, so 
  \begin{align}
      \label{eq:hrr}
      h b_1 g_p^{\pm 1} b_2 = c_1 g_q^{\pm 1} c_2.
  \end{align}
  Since $p = \max\{s_1,\ldots,s_m\}$, and since $m\geq K$, we know that $m\leq m^2 \leq p$. So $b_1,b_2\in (B_{p-1})^{p-1}\subseteq (C_{p-1})^{p-1}$. Similarly $c_1,c_2\in (C_{q-1})^{q-1}$.
  
  Consider the case that $p>q$. Then $c_1,c_2,g_q^{\pm 1}\in (C_q)^q\subseteq (C_{p-1})^{p-1}$. Hence  $g_p^{\pm 1} = [b_1^{-1}] h^{-1} [c_1 g_q^{\pm 1} c_2 b_2^{-1}]$ by~\eqref{eq:hrr}, and so $$g_p \in (C_{p-1})^{4(p-1)} \{h,h^{-1}\} (C_{p-1})^{4(p-1)}\subseteq (C_{p-1})^{8(p-1)+1},$$ which is a contradiction with our choice of $g_p$, since $p>N$. Similarly, if $p<q$, we get a contradiction. So we can assume that $p=q$.
  
  If $p = q$, then by~\eqref{eq:hrr} we have 
  $$h b_1 g_p^{\pm 1} b_2 = c_1 g_p^{\pm 1} c_2,$$
  and $c_1, c_2, b_1, b_2 \in (C_{p-1})^{p-1}$. So, for $x=c_1^{-1} h b_1\in (C_{p-1})^{2(p-1)+1}$ we have $g_p^{\pm 1} x g_p^{\pm 1} = c_2 b_2^{-1} \in (C_{p-1})^{2(p-1)}\subseteq (C_{p-1})^{2(p-1)+1}$. By the fact that $g_p$ is a super-switching element for $(C_{p-1})^{2(p-1)+1}$ and from Claim ~\ref{claim:super-switching}, we get that $x$ is the identity.
  
  So $h b_1 = c_1$, i.e.
  $$hf_{s_1}(w_1)\cdots f_{s_{i_{l(s)}-1}}(w_{i_{l(s)}-1}) = f_{t_1}(v_1)\cdots f_{t_{j_{l(t)}-1}}(v_{j_{l(t)}-1}).$$
  By the exact same argument, we can see this leads to a contradiction unless
  $$hf_{s_1}(w_1)\cdots f_{s_{i_{l(s)-1}-1}}(w_{i_{l(s)-1}-1}) = f_{t_1}(v_1)\cdots f_{t_{j_{l(t)-1}-1}}(v_{j_{l(t)-1}-1}).$$
  And again, this leads to a contradiction unless
  $$hf_{s_1}(w_1)\cdots f_{s_{i_{l(s)-2}-1}}(w_{i_{l(s)-2}-1}) = f_{t_1}(v_1)\cdots f_{t_{j_{l(t)-2}-1}}(v_{j_{l(t)-2}-1}).$$
  Note that if $l(s)\neq l(t)$, at some point in this process we get that either all the $s_i$'s or all the $t_i$'s are at most $N$ while the other string has characters strictly greater than $N$. This leads to a contradiction similar to the case $p\neq q$, which we explained before. So, by continuing this process, we get a contradiction unless
  \begin{align}
      \label{eq:last step}
      h f_{s_1}(w_1)\cdots f_{s_{i_1-1}}(w_{i_1-1}) = f_{t_1}(v_1)\cdots f_{t_{j_1-1}}(v_{j_1-1}).
  \end{align}
  Note that $s_1,\ldots,s_{i_1-1}\leq N$, which implies $$f_{s_1}(w_1)=\cdots=f_{s_{i_1-1}}(w_{i_1-1})=e.$$
  Similarly,  $t_1,\ldots,t_{j_1-1}\leq N$ implies that  $$f_{t_1}(v_1)=\cdots=f_{t_{j_1-1}}(v_{j_1-1})=e.$$
  So, from ~\eqref{eq:last step} we get $h=e$, which is a contradiction.

\end{proof}

\bibliography{main}
\end{document}